\theoremstyle{plain}
\newtheorem{thm}{\protect\theoremname}[section]
\theoremstyle{definition}
\newtheorem{prop}{\protect\propositionname}[section]
\newtheorem{example}[thm]{\protect\examplename}
\theoremstyle{remark}
\newtheorem{rem}[thm]{\protect\remarkname}
\theoremstyle{plain}
\newtheorem{lem}[thm]{\protect\lemmaname}
\numberwithin{equation}{section}
\newcommand{\R}{{\mathbb R}}
\newcommand{\X}{{\R^d}}
\newcommand{\E}{{\mathbb E}}
\newcommand{\la}{\lambda}
\providecommand{\propositionname}{Proposition}
\providecommand{\examplename}{Example}
\providecommand{\lemmaname}{Lemma}
\providecommand{\remarkname}{Remark}
\providecommand{\theoremname}{Theorem}
\begin{document}
\title{Random Time Dynamical Systems I: General Structures}
\author{\textbf{Yuri Kondratiev}\\
 Department of Mathematics, University of Bielefeld,\\
 D-33615 Bielefeld, Germany,\\
 Dragomanov University, Kiev, Ukraine\\
 Email: kondrat@math.uni-bielefeld.de\and\textbf{Jos{\'e} Lu{\'i}s
da Silva},\\
 CIMA, University of Madeira, Campus da Penteada,\\
 9020-105 Funchal, Portugal.\\
 Email: joses@staff.uma.pt}
\date{\today}

\maketitle

\begin{abstract}
 In this paper we introduce the concept of  random time changes in  dynamical systems. 
 The subordination
principle may be applied 
to study the long time behavior of the random time systems. We show,
under certain assumptions on the class of random time, that the subordinated
system exhibits a slower time decay which is determined by the random
time characteristics.  In the path asymptotic a random time change is reflected in
the new velocity of the resulting dynamics. 

\emph{Keywords}: Dynamical systems; random time change;
inverse subordinator; long time behavior. 

\emph{AMS Subject Classification 2010}: 37A50, 45M05,
35R11, 60G52. 
\end{abstract}

\section{Introduction}

The idea to consider stochastic processes with general random times
is known at least from the classical book by Gikhman and Skorokhod
\cite{GS74}. In the case of Markov processes time changes by subordinators
was considered already by Bochner \cite{Bochner1962}. In this case
it gives again a Markov process, so-called Bochner subordinated Markov
process. A more interesting situation appears in the case of inverse
subordinators. After the time change we do not obtain anymore a Markov
process and the study of such kind of processes become more challeging. 
At this point we would like to point out the  work
of Montroll and Weiss, 1965 \cite{Montroll1965} which considered
the physically motivated case of random walks in the random time.
This work created a wide research area related to the study of Markov
processes with inverse stable subordinators as random times changes,
see the book \cite{Meerschaert2012} for a detailed review and historical
comments.

For processes with random time change which are not subordinators
or inverse subordinators the situation is much less investigated. 
On the one hand additional assumptions on the stable subordinator turns 
the time change process very restrictive in applications. 
On the other hand, we find technical difficulties in
handling general inverse subordinators. These difficulties may be
overcome for certain sub-classes of inverse subordinators, see, e.g.,
\cite{KKdS19,KKS2018}.
The random time change is essential in modelling 
several physical systems (ecological and biological models), see, e.g., 
\cite{Magdziarz2015} and references therein for other applications.
A very particular choice of a random time process has not a special
motivation. 

There is a natural question concerning the use of a random
time change not only in stochastic dynamics but in a more wide class
of dynamical problems. In this paper we concentrate on the analysis
of this question in the case of dynamical systems in $\X$.

Let $X(t,x)$, $t\ge0$, $x\in\mathbb{R}^{d}$ be a dynamical system
in $\mathbb{R}^{d}$ starting from $x$, that is, $X(0,x)=x$. This
system is also a deterministic Markov process. Given $f:\mathbb{R}^{d}\longrightarrow\mathbb{R}$
we define 
\[
u(t,x):=f(X(t,x)).
\]
Then we have a version of the Kolmogorov equation, called the
Liouville equation in the theory of dynamical systems, namely 
\[
\frac{\partial}{\partial t}u(t,x)=Lu(t,x),
\]
where $L$ is the generator of a semigroup. This semigroup is given
by the solution of the Liouville equation, see for example \cite{EN2000,RS75,Yosida80}
for more details. 

If $E(t)$ is an inverse subordinator process (see Section \ref{sec:RT-FA} below for details and examples)
then we may consider the time changed random dynamical systems 
\[
Y(t):=X(E(t)).
\]
Our aim it to analyze the properties of $Y(t)$ depending on those
of the initial dynamical systems $X(t)$. In particular, we can
define 
\[
v(t,x):=\E[f(Y(t,x)]
\]
and try to compare the behavior $u(t,x)$ and $v(t,x)$ for a certain
class of functions $f$.

In the present paper we would like to present the main problems which
appear naturally in the study of random time changes in dynamical
systems. We illustrate the solutions to these problems with the simplest
examples is Section \ref{sec:RTD}. We postpone to a forthcoming
paper the study of particular classes of dynamical systems and random
times for which more detailed information may be obtained. In a certain
sense, in this paper foundations are presented to study random time changes
in dynamical systems. We hope that this program may be interesting
and attractive for mathematicians working and studying the interplay
between stochastic and dynamical systems theories. Having this in
mind, we will not elaborate detailed conditions on the dynamical systems
and random times in our considerations. The latter shall be (and may
be) done for each particular model we would like to study in detail.

The rest of the paper is organized as follows. In Section \ref{sec:RT-FA}
we present the class of inverse subordinators and the associated general
fractional derivatives. In Section \ref{sec:RTD} we consider the
simplest examples of dynamical systems and present the first results
when the random time is associated to the $\alpha$-space subordinator.
In Section \ref{sec:PGM} we consider a dynamical system as a deterministic
Markov processes and introduce the notion of potential and Green measure
of the dynamical system. Finally, in Section \ref{sec:PT} we investigate
the path transformation of a simple dynamical system by a random time.

\section{Random Times and Fractional Analysis}

\label{sec:RT-FA}In this section we introduce the inverse subordinators
and the corresponding general fractional derivatives. Associated to
these classes of inverse subordinators we define a kernel $k\in L_{\mathrm{loc}}^{1}(\mathbb{R}_{+})$
which is used to define a general fractional derivatives (GFD), see
\cite{Kochubei11} for details and applications to fractional differential
equations. These admissible kernels $k$ are characterized in terms
of their Laplace transforms $\mathcal{K}(\lambda)$ as $\lambda\to0$,
see assumption (H) below and also Lemma \ref{KL}.

Let $S=\{S(t),\;t\ge0\}$ be a subordinator without drift starting
at zero, that is, an increasing L{\'e}vy process starting at zero,
see \cite{Bertoin96} for more details. The Laplace transform of $S(t)$,
$t\ge0$ is expressed in terms of a Bernstein function $\Phi:[0,\infty)\longrightarrow[0,\infty)$
(also known as Laplace exponent) by 
\[
\mathbb{E}(e^{-\lambda S(t)})=e^{-t\Phi(\lambda)},\quad\lambda\ge0.
\]
The function $\Phi$ admits the representation 
\begin{equation}
\Phi(\lambda)=\int_{(0,\infty)}(1-e^{-\lambda\tau})\,\mathrm{d}\sigma(\tau),\label{eq:Levy-Khintchine}
\end{equation}
where the measure $\sigma$ (called L{\'e}vy measure) has support
in $[0,\infty)$ and fulfills 
\begin{equation}
\int_{(0,\infty)}(1\wedge\tau)\,\mathrm{d}\sigma(\tau)<\infty.\label{eq:Levy-condition}
\end{equation}
In what follows we assume that the L{\'e}vy measure $\sigma$ satisfy
\begin{equation}
\sigma\big((0,\infty)\big)=\infty.\label{eq:Levy-massumption}
\end{equation}
Using the L{\'e}vy measure $\sigma$ we define the kernel $k$ as
follows 
\begin{equation}
k:(0,\infty)\longrightarrow(0,\infty),\;t\mapsto k(t):=\sigma\big((t,\infty)\big).\label{eq:k}
\end{equation}
Its Laplace transform is denoted by $\mathcal{K}$, that is, for any
$\lambda\ge0$ one has 
\begin{equation}
\mathcal{K}(\lambda):=\int_{0}^{\infty}e^{-\lambda t}k(t)\,\mathrm{d}t.\label{eq:LT-k}
\end{equation}
The relation between the function $\mathcal{K}$ and the Laplace exponent
$\Phi$ is given by 
\begin{equation}
\Phi(\lambda)=\lambda\mathcal{K}(\lambda),\quad\forall\lambda\ge0.\label{eq:Laplace-exponent}
\end{equation}

In what follows we make the following assumption on the Laplace exponent
$\Phi(\lambda)$ of the subordinator $S$. 
\begin{description}
\item [{(H)}] $\Phi$ is a complete Bernstein function (that is, the L{\'e}vy
measure $\sigma$ is absolutely continuous with respect to the Lebesgue
measure) and the functions $\mathcal{K}$, $\Phi$ satisfy 
\begin{equation}
\mathcal{K}(\lambda)\to\infty,\text{ as \ensuremath{\lambda\to0}};\quad\mathcal{K}(\lambda)\to0,\text{ as \ensuremath{\lambda\to\infty}};\label{eq:H1}
\end{equation}
\begin{equation}
\Phi(\lambda)\to0,\text{ as \ensuremath{\lambda\to0}};\quad\Phi(\lambda)\to\infty,\text{ as \ensuremath{\lambda\to\infty}}.\label{eq:H2}
\end{equation}
\end{description}
\begin{example}[$\alpha$-stable subordinator]
\label{exa:alpha-stable1}A classical example of a subordinator $S$
is the so-called $\alpha$-stable process with index $\alpha\in(0,1)$.
Specifically, a subordinator is $\alpha$-stable if its Laplace exponent
is 
\[
\Phi(\lambda)=\lambda^{\alpha}=\frac{\alpha}{\Gamma(1-\alpha)}\int_{0}^{\infty}(1-e^{-\lambda\tau})\tau^{-1-\alpha}\,\mathrm{d}\tau.
\]
In this case it follows that the L{\'e}vy measure is $\mathrm{d}\sigma_{\alpha}(\tau)=\frac{\alpha}{\Gamma(1-\alpha)}\tau^{-(1+\alpha)}\,\mathrm{d}\tau$. 
The corresponding kernel $k_{\alpha}$ has the form $k_{\alpha}(t)=g_{1-\alpha}(t):=\frac{t^{-\alpha}}{\Gamma(1-\alpha)}$,
$t\ge0$ and its Laplace transform is $\mathcal{K}_{\alpha}(\lambda)=\lambda^{\alpha-1}$,
$\lambda\ge0$. 
\end{example}

\begin{example}[Gamma subordinator]
\label{exa:gamma-subordinator}The Gamma process $Y^{(a,b)}$ with
parameters $a,b>0$ is another example of a subordinator with Laplace
exponent 
\[
\Phi_{(a,b)}(\lambda)=a\log\left(1+\frac{\lambda}{b}\right)=\int_{0}^{\infty}(1-e^{-\lambda\tau})a\tau^{-1}e^{-b\tau}\,\mathrm{d}\tau,
\]
the second equality is known as the Frullani integral. The L{\'e}vy
measure is given by $d\sigma_{(a,b)}(\tau)=a\tau^{-1}e^{-b\tau}\,\mathrm{d}\tau.$
The associated kernel $k_{(a,b)}(t)=a\Gamma(0,bt)$, $t>0$, (here $\Gamma(\nu,z):=\int_{z}^{\infty}e^{-t}t^{\nu-1}\,\mathrm{d}t$
is the incomplete gamma function, see Section 8.3 in \cite{GR81}) and its
Laplace transform is $\mathcal{K}_{(a,b)}(\lambda)=a\lambda^{-1}\log(1+\frac{\lambda}{b})$,
$\lambda>0$. 
\end{example}

\begin{example}[Truncated $\alpha$-stable subordinator]
\label{exa:truncated_stable_ sub}The truncated $\alpha$-stable
subordinator (see Example 2.1-(ii) in \cite{Chen2017}) $S_{\delta}$,
$\delta>0$ is a driftless $\alpha$-stable subordinator with L{\'e}vy
measure given by 
\[
\mathrm{d}\sigma_{\delta}(\tau):=\frac{\alpha}{\Gamma(1-\alpha)}\tau^{-(1+\alpha)}1\!\!1_{(0,\delta]}(\tau)\,\mathrm{d}\tau,\qquad\delta>0.
\]
The corresponding Laplace exponent is 
\[
\Phi_{\delta}(\lambda)=\lambda^{\alpha}\left(1-\frac{\Gamma(-\alpha,\delta\lambda)}{\Gamma(-\alpha)}\right)+\frac{\delta^{-\alpha}}{\Gamma(1-\alpha)}
\]
and the associated kernel $k_{\delta}$ is given by 
\[
k_{\delta}(t):=\sigma_{\delta}\big((t,\infty)\big)=\frac{1\!\!1_{(0,\delta]}(t)}{\Gamma(1-\beta)}(t^{-\beta}-\delta^{-\beta}),\;t>0.
\]
\end{example}

\begin{example}[Sum of two alpha stable subordinators]
\label{exa:sum-two-stables}Let $0<\alpha<\beta<1$ be given and
$S_{\alpha,\beta}(t)$, $t\ge0$ the driftless subordinator with Laplace
exponent given by 
\[
\Phi_{\alpha,\beta}(\lambda)=\lambda^{\alpha}+\lambda^{\beta}.
\]
It is clear from Example \ref{exa:alpha-stable1}  that the corresponding L{\'e}vy measure
$\sigma_{\alpha,\beta}$ is the sum of two L{\'e}vy measures, that
is, 
\[
\mathrm{d}\sigma_{\alpha,\beta}(\tau)=\mathrm{d}\sigma_{\alpha}(\tau)+\mathrm{d}\sigma_{\alpha}(\tau)=\frac{\alpha}{\Gamma(1-\alpha)}\tau^{-(1+\alpha)}\,\mathrm{d}\tau+\frac{\beta}{\Gamma(1-\beta)}\tau^{-(1+\beta)}\,\mathrm{d}\tau.
\]
Then the associated kernel $k_{\alpha,\beta}$ is 
\[
k_{\alpha,\beta}(t):=g_{1-\alpha}(t)+g_{1-\beta}(t)=\frac{t^{-\alpha}}{\Gamma(1-\alpha)}+\frac{t^{-\beta}}{\Gamma(1-\beta)},\;t>0
\]
and its Laplace transform is $\mathcal{K}_{\alpha,\beta}(\lambda)=\mathcal{K}_{\alpha}(\lambda)+\mathcal{K}_{\beta}(\lambda)=\lambda^{\alpha-1}+\lambda^{\beta-1}$,
$\lambda>0$. 
\end{example}

\begin{example}[Kernel with exponential weight]
\label{exa:exponential-weight} Given
$\gamma>0$ and $0<\alpha<1$ consider the subordinator with Laplace
exponent 
\[
\Phi_{\gamma}(\lambda):=(\lambda+\gamma)^{\alpha}=\left(\frac{\lambda+\gamma}{\lambda}\right)^{1+\alpha}\frac{\alpha}{\Gamma(1-\alpha)}\int_{0}^{\infty}(1-e^{-\lambda\tau})\tau^{-1-\alpha}\,\mathrm{d}\tau.
\]
It follows that the L{\'e}vy measure is given by $\mathrm{d}\sigma_{\gamma}(\tau)=\left(\frac{\lambda+\gamma}{\lambda}\right)^{1+\alpha}\frac{\alpha}{\Gamma(1-\alpha)}\tau^{-(1+\alpha)}\mathrm{d}\tau$
which yields a kernel $k_{\gamma}$ with exponential weight, namely
\[
k_{\gamma}(t)=g_{1-\alpha}(t)e^{-\gamma t}=\frac{t^{-\alpha}}{\Gamma(1-\alpha)}e^{-\gamma t}.
\]
The corresponding Laplace transform of $k_{\gamma}$ is given by $\mathcal{K}_{\gamma}(\lambda)=\lambda^{-1}(\lambda+\gamma)^{\alpha}$,
$\lambda>0$. 
\end{example}

\begin{rem} The subordinators from Examples \ref{exa:alpha-stable1}--\ref{exa:exponential-weight} 
provides different types of kernels $k$ which give rise to different types of fractional derivatives. 
These fractional derivatives were introduced to study the theory of relaxation and diffusion equations, see \cite{Kochubei11}
 and references therein.
\end{rem}

Denote by $E$ the inverse process of the subordinator $S$, that
is, 
\begin{equation}
E(t):=\inf\{s>0\mid S(s)>t\}=\sup\{s\ge0\mid S(s)\le t\}.\label{eq:inverse-sub}
\end{equation}
For any $t\ge0$ we denote by $G_{t}^{k}(\tau):=G_{t}(\tau)$, $\tau\ge0$
the marginal density of $E(t)$ or, equivalently 
\[
G_{t}(\tau)\,\mathrm{d}\tau=\frac{\partial}{\partial\tau}P(E(t)\le\tau)\,\mathrm{d}\tau=\frac{\partial}{\partial\tau}P(S(\tau)\ge t)\,\mathrm{d}\tau=-\frac{\partial}{\partial\tau}P(S(\tau)<t)\,\mathrm{d}\tau.
\]

As the density $G_{t}(\tau)$ plays an important role in what follows,
we collect the most important properties needed later on. 
\begin{rem}
\label{rem:distr-alphastab-E}If $S$ is the $\alpha$-stable process,
$\alpha\in(0,1)$, then the inverse process $E(t)$ has Laplace transform
(cf.~Prop.~1(a) in \cite{Bingham1971}) given by 
\begin{equation}
\mathbb{E}(e^{-\lambda E(t)})=\int_{0}^{\infty}e^{-\lambda\tau}G_{t}(\tau)\,\mathrm{d}\tau=\sum_{n=0}^{\infty}\frac{(-\lambda t^{\alpha})^{n}}{\Gamma(n\alpha+1)}=E_{\alpha}(-\lambda t^{\alpha}).\label{eq:Laplace-density-alpha}
\end{equation}
It follows from the asymptotic behavior of the Mittag-Leffler function
$E_{\alpha}$ that $\mathbb{E}(e^{-\lambda E(t)})\sim Ct^{-\alpha}$
as $t\to\infty$. Using the properties of the Mittag-Leffler function
$E_{\alpha}$, we can show that the density $G_{t}(\tau)$ is given
in terms of the Wright function $W_{\mu,\nu}$, namely $G_{t}(\tau)=t^{-\alpha}W_{-\alpha,1-\alpha}(\tau t^{-\alpha})$,
see \cite{Gorenflo1999} for more details. 
\end{rem}

For a general subordinator, the following lemma determines the $t$-Laplace
transform of $G_{t}(\tau)$, with $k$ and $\mathcal{K}$ given in
\eqref{eq:k} and \eqref{eq:LT-k}, respectively. For the proof see
\cite{Kochubei11} or Lemma~3.1 in \cite{Toaldo2015}. 
\begin{lem}
\label{lem:t-LT-G}The $t$-Laplace transform of the density $G_{t}(\tau)$
is given by 
\begin{equation}
\int_{0}^{\infty}e^{-\lambda t}G_{t}(\tau)\,\mathrm{d}t=\mathcal{K}(\lambda)e^{-\tau\lambda\mathcal{K}(\lambda)}.\label{eq:LT-G-t}
\end{equation}
The double ($\tau,t$)-Laplace transform of $G_{t}(\tau)$ is 
\begin{equation}
\int_{0}^{\infty}\int_{0}^{\infty}e^{-p\tau}e^{-\lambda t}G_{t}(\tau)\,\mathrm{d}t\,\mathrm{d}\tau=\frac{\mathcal{K}(\lambda)}{\lambda\mathcal{K}(\lambda)+p}.\label{eq:double-Laplace}
\end{equation}
\end{lem}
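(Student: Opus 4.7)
The plan is to exploit the fundamental duality $\{E(t) \le \tau\} = \{S(\tau) \ge t\}$ coming from the definition \eqref{eq:inverse-sub}, combined with the known Laplace transform $\mathbb{E}(e^{-\lambda S(\tau)}) = e^{-\tau\Phi(\lambda)}$ of the subordinator, and formula \eqref{eq:Laplace-exponent} relating $\Phi$ and $\mathcal{K}$.

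First I would fix $\tau \ge 0$ and consider the distribution function $H(\tau,t) := P(S(\tau) < t)$. Under assumption (H) the Lévy measure satisfies \eqref{eq:Levy-massumption} and is absolutely continuous, so $S(\tau)$ has no atom at $t$ and one may write
\[
P(E(t) \le \tau) = P(S(\tau) \ge t) = 1 - H(\tau,t),
\]
so that $G_t(\tau) = -\partial_\tau H(\tau,t)$. Next, taking the Laplace transform in $t$ and applying Fubini to
\[
\int_0^\infty e^{-\lambda t} H(\tau,t)\,\mathrm{d}t
= \int_{[0,\infty)}\!\!\int_s^\infty e^{-\lambda t}\,\mathrm{d}t\,\mathrm{d}P_{S(\tau)}(s)
= \frac{1}{\lambda}\,\mathbb{E}\bigl(e^{-\lambda S(\tau)}\bigr) = \frac{e^{-\tau \Phi(\lambda)}}{\lambda},
\]
yields an explicit expression in closed form.

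The key step is then to interchange the $\tau$-derivative with the $t$-integration in order to obtain
\[
\int_0^\infty e^{-\lambda t} G_t(\tau)\,\mathrm{d}t
= -\frac{\partial}{\partial\tau}\Bigl(\frac{e^{-\tau\Phi(\lambda)}}{\lambda}\Bigr)
= \frac{\Phi(\lambda)}{\lambda}\,e^{-\tau\Phi(\lambda)}
= \mathcal{K}(\lambda)\,e^{-\tau\lambda\mathcal{K}(\lambda)},
\]
where the last equality is \eqref{eq:Laplace-exponent}. This produces \eqref{eq:LT-G-t}. Formula \eqref{eq:double-Laplace} then follows immediately by integrating \eqref{eq:LT-G-t} against $e^{-p\tau}\,\mathrm{d}\tau$ on $[0,\infty)$, since the resulting integrand is an exponential with positive rate $p + \lambda\mathcal{K}(\lambda)$.

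The main technical obstacle is the justification of differentiation under the integral sign. I would argue this by noting that the Laplace transform $\lambda \mapsto e^{-\tau\Phi(\lambda)}/\lambda$ is smooth in $\tau$ with derivative $-\Phi(\lambda) e^{-\tau\Phi(\lambda)}/\lambda$, and that the map $\tau \mapsto G_t(\tau)$ is the genuine density of $E(t)$, so $G_t \ge 0$ and $\int_0^\infty G_t(\tau)\,\mathrm{d}\tau = 1$; together with Fubini and the identity $\int_0^\infty G_t(\sigma)\,\mathrm{d}\sigma = P(E(t) \le \tau)$ restricted to $[0,\tau]$, this allows a clean derivation without needing pointwise smoothness of $G_t$ in $\tau$. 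Equivalently, one may first establish \eqref{eq:double-Laplace} by directly computing $\int_0^\infty e^{-p\tau}\mathbb{E}(e^{-\lambda E(t)}\cdot\mathrm{d}t)$-type expressions, bypassing the derivative altogether, and then recover \eqref{eq:LT-G-t} by the uniqueness of the Laplace transform. Either route completes the proof, and both reduce, by \eqref{eq:Laplace-exponent}, to the single input $\mathbb{E}(e^{-\lambda S(\tau)}) = e^{-\tau\lambda\mathcal{K}(\lambda)}$.
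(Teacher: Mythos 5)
Your proof is correct, and it is essentially the argument the paper itself omits: the paper merely cites \cite{Kochubei11} and Lemma~3.1 of \cite{Toaldo2015}, where the same route is taken --- the duality $\{E(t)\le\tau\}=\{S(\tau)\ge t\}$, the $t$-Laplace transform $\lambda^{-1}e^{-\tau\Phi(\lambda)}$ of the distribution function, differentiation in $\tau$, and the identity $\Phi(\lambda)=\lambda\mathcal{K}(\lambda)$. Your fallback of integrating the candidate density over $[0,\tau]$ and invoking uniqueness of Laplace transforms (rather than justifying the interchange of $\partial_\tau$ with $\int_0^\infty\mathrm{d}t$) is a clean way to make the step rigorous; just note the identity should read $\int_0^\tau G_t(\sigma)\,\mathrm{d}\sigma=P(E(t)\le\tau)$.
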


For any $\alpha\in(0,1)$ the Caputo-Dzhrbashyan fractional derivative
of order $\alpha$ of a function $u$ is defined by (see e.g., \cite{KST2006}
and references therein) 
\begin{equation}
\big(\mathbb{D}_{t}^{\alpha}u\big)(t)=\frac{d}{dt}\int_{0}^{t}k_{\alpha}(t-\tau)u(\tau)\,\mathrm{d}\tau-k_{\alpha}(t)u(0),\quad t>0,\label{eq:Caputo-derivative}
\end{equation}
where $k_{\alpha}$ is given in Example \ref{exa:alpha-stable1},
that is, $k_{\alpha}(t)=g_{1-\alpha}(t)=\frac{t^{-\alpha}}{\Gamma(1-\alpha)}$,
$t>0$. In general, starting with a subordinator $S$ and the kernel
$k\in L_{\mathrm{loc}}^{1}(\mathbb{R}_{+})$ given in \eqref{eq:k},
we may define a differential-convolution operator by 
\begin{equation}
\big(\mathbb{D}_{t}^{(k)}u\big)(t)=\frac{d}{dt}\int_{0}^{t}k(t-\tau)u(\tau)\,\mathrm{d}\tau-k(t)u(0),\;t>0.\label{eq:general-derivative}
\end{equation}
The operator $\mathbb{D}_{t}^{(k)}$ is also known as generalized
fractional derivative. 
\begin{example}[Distributed order derivative]
\label{exa:distr-order-deriv}Consider the kernel $k$ defined by
\begin{equation}
k(t):=\int_{0}^{1}g_{\alpha}(t)\,\mathrm{d}\alpha=\int_{0}^{1}\frac{t^{\alpha-1}}{\Gamma(\alpha)}\,\mathrm{d}\alpha,\quad t>0.\label{eq:distributed-kernel}
\end{equation}
Then it is easy to see that 
\[
\mathcal{K}(\lambda)=\int_{0}^{\infty}e^{-\lambda t}k(t)\,\mathrm{d}t=\frac{\lambda-1}{\lambda\log(\lambda)},\quad\lambda>0.
\]
The corresponding differential-convolution operator $\mathbb{D}_{t}^{(k)}$
is called distributed order derivative, see \cite{Atanackovic2009,Daftardar-Gejji2008,Hanyga2007,Kochubei2008,Gorenflo2005,Meerschaert2006}
for more details and applications. 
\end{example}

We say that the functions $f$ and $g$ are \emph{asymptotically equivalent
at infinity}, and denote $f\sim g$ as $x\to\infty$, meaning that
\[
\lim_{x\to\infty}\frac{f(x)}{g(x)}=1.
\]
We say that a function $f$ is slowly varying (see \cite{Feller71,Seneta1976})
if 
\[
\lim_{x\to\infty}\frac{f(\lambda x)}{f(x)}=1,\quad\mathrm{for\;any}\;\lambda>0.
\]

The following lemma may be extracted from the proof of Theorem 4.2
in \cite{KocKon2017} which is used below. 
\begin{lem}
\label{KL}Assume that the subordinator $S(t)$ and its inverse $E(t)$,
$t\ge0$ are such that 
\begin{equation}
\mathcal{K}(\la)\sim\la^{-\gamma}Q\left(\frac{1}{\la}\right),\quad\la\to0,\label{eq:condition-K}
\end{equation}
where $0\leq\gamma\leq1$ and $Q$ is a slowly varying function. In
addition define 
\[
A(t,z):=\int_{0}^{\infty}e^{-z\tau}G_{t}(\tau)d\tau,\quad t>0,\;z>0.
\]
Then 
\[
A(t,z)\sim\frac{1}{z}\frac{t^{\gamma-1}}{\Gamma(\gamma)}Q(t),\quad t\to\infty.
\]
\end{lem}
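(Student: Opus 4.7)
The plan is to pass to the Laplace side, read off the behavior of the transform near $\lambda=0$, and then invert via a Tauberian theorem. Concretely, observe that $A(t,z)$ is precisely the object whose $t$-Laplace transform is computed in Lemma~\ref{lem:t-LT-G}: exchanging the order of integration and using \eqref{eq:double-Laplace},
\begin{equation*}
\widehat{A}(\lambda,z) := \int_{0}^{\infty} e^{-\lambda t} A(t,z)\,\mathrm{d}t
= \int_{0}^{\infty}\!\!\int_{0}^{\infty} e^{-\lambda t} e^{-z\tau} G_{t}(\tau)\,\mathrm{d}t\,\mathrm{d}\tau
= \frac{\mathcal{K}(\lambda)}{\lambda\mathcal{K}(\lambda)+z}.
\end{equation*}

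Next, I would extract the $\lambda \to 0$ asymptotics of $\widehat{A}(\lambda,z)$. By assumption~(H), in particular \eqref{eq:H2}, $\lambda\mathcal{K}(\lambda)=\Phi(\lambda)\to 0$ as $\lambda\to 0$, so the denominator satisfies $\lambda\mathcal{K}(\lambda)+z \to z > 0$. Combined with the hypothesis \eqref{eq:condition-K}, this yields
\begin{equation*}
\widehat{A}(\lambda,z) \sim \frac{\mathcal{K}(\lambda)}{z} \sim \frac{1}{z}\,\lambda^{-\gamma} Q\!\left(\frac{1}{\lambda}\right), \qquad \lambda \to 0.
\end{equation*}
The right-hand side is of the form $\lambda^{-\gamma} L(1/\lambda)$ with $L(t):=Q(t)/z$ slowly varying (for fixed $z>0$).

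To conclude, I would apply Karamata's Tauberian theorem for monotone functions (e.g., Feller \cite{Feller71}, XIII.5, Theorem~4). Its applicability requires that $A(\cdot,z)$ be monotone: this follows because $E(t)$ is non-decreasing in $t$, so $A(t,z)=\mathbb{E}[e^{-zE(t)}]$ is non-increasing on $[0,\infty)$ for every fixed $z>0$. The Tauberian theorem then converts the Laplace-side asymptotic into
\begin{equation*}
A(t,z) \sim \frac{1}{z}\,\frac{t^{\gamma-1}}{\Gamma(\gamma)}\,Q(t), \qquad t \to \infty,
\end{equation*}
which is the claimed statement.

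The main obstacle is of a bookkeeping nature rather than a deep one: confirming that the Karamata hypotheses apply uniformly enough in the relevant range of $\gamma$. The theorem is classically formulated for exponents $\rho>0$, so the case $\gamma>0$ is direct; the boundary case $\gamma=0$ (where the factor $1/\Gamma(\gamma)$ degenerates) must be interpreted separately, and the case $\gamma=1$ requires the harmless observation that $Q(1/\lambda)\to 0$ as $\lambda\to 0$ is compatible with $\Phi(\lambda)\to 0$ from (H2). Apart from these boundary points, every step is routine once the identity \eqref{eq:double-Laplace} is in hand.
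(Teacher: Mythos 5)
Your proof is correct and follows essentially the same route as the paper's source: the paper does not prove Lemma \ref{KL} inline but extracts it from the proof of Theorem 4.2 in \cite{KocKon2017}, which likewise computes the double Laplace transform $\mathcal{K}(\lambda)/(\lambda\mathcal{K}(\lambda)+z)$, notes $\Phi(\lambda)\to 0$ so the denominator tends to $z$, and applies the Karamata Tauberian theorem together with the monotone density argument (valid here since $A(t,z)=\mathbb{E}[e^{-zE(t)}]$ is non-increasing in $t$). Your remarks about the boundary cases $\gamma=0$ and $\gamma=1$ are apt and are glossed over in the paper as well.
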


\begin{rem}
We point out that the condition \eqref{eq:condition-K} on the Laplace
transform of the kernel $k$ is satisfied by all Examples \ref{exa:alpha-stable1}--\ref{exa:exponential-weight}
and \ref{exa:distr-order-deriv} above. The case of Example \ref{exa:sum-two-stables}
is easily checked as 
\[
\mathcal{K}(\lambda)=\lambda^{\alpha}+\lambda^{\beta}=\lambda^{-(1-\alpha)}(1+\lambda^{-(\alpha-\beta)})=\lambda^{-\gamma}Q\left(\frac{1}{\lambda}\right),
\]
where $\gamma=1-\alpha>0$ and $Q(t)=1+t^{\alpha-\beta}$ is a slowly
varying function. 

\end{rem}

\section{Random Time Dynamics}

\label{sec:RTD}In this section we study the effect of the subordination
by the density $G_{t}(\tau)$ of the inverse process $E(t)$, $t\ge0$
of a dynamical system.

Define a random dynamical systems 
\[
Y(t,x,\omega):=X(E(t,\omega),x,\omega),\quad x\in\X.
\]
For suitable functions $f:\X\longrightarrow\R$ define 
\[
v(t,x):=\E[f(Y(t,x))].
\]
Then $v(t,x)$ is the solution to an evolution equation with the same
generator $L$ but with generalized fractional derivative (see \eqref{eq:general-derivative}),
namely 
\begin{equation}
\label{fde-for-sub-sol}
\mathbb{D}_{t}^{(k)}v(t,x)=Lv(t,x).
\end{equation}
As a result the following subordination formula holds: 
\begin{equation}
v(t,x)=\int_{0}^{\infty}u(\tau,x)G_{t}(\tau)\,\mathrm{d}\tau.\label{SUBFOR}
\end{equation}
The problem (as in the Markov case) is to study the change of the behavior
of $u$ after subordination. The formula (\ref{SUBFOR}) is the main
object for the analysis of this problem. 

In the following proposition we consider
the simplest evolution equation in $\X$
\[
\frac{\mathrm{d}}{\mathrm{d}t}X(t)=v\in\X,\quad X(0)=x_{0}\in\X.
\]
The corresponding dynamics is 
\[
X(t)=x_{0}+vt,\quad t\geq0.
\]
We assume $x_{0}=0$ for simplicity.
Take $f(x)=e^{-\alpha|x|},\;\alpha>0$.
The corresponding solution to the Liouville equation is 
\[
u(t,x)=e^{-\alpha t|v|},\quad t\geq0.
\]

\begin{prop}
 Assume that the assumptions
of Lemma \ref{KL} are satisfied. Then
\[
v(t,x)\sim\frac{1}{\alpha|v|\Gamma(\gamma)}t^{\gamma-1}Q(t),\quad t\to\infty.
\]

\end{prop}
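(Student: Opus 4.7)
The plan is to combine the subordination formula (\ref{SUBFOR}) with Lemma~\ref{KL}, applied to the specific exponential form of $u$ recorded just above the proposition. Since the dynamical system $X(t)=vt$ with $x_{0}=0$ yields the Liouville solution $u(\tau,x)=e^{-\alpha|v|\tau}$, which is independent of $x$, substituting into (\ref{SUBFOR}) gives
\[
v(t,x)=\int_{0}^{\infty}e^{-\alpha|v|\tau}\,G_{t}(\tau)\,\mathrm{d}\tau.
\]
Assuming $v\neq 0$ (the only case in which the asymptotic is non-trivial), the right-hand side is exactly $A(t,z)$ from Lemma~\ref{KL} with $z=\alpha|v|>0$. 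Note that the integral is well-defined and bounded by $1$ since $G_{t}$ is a probability density, so no integrability issues arise.

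Next, I would simply invoke Lemma~\ref{KL} under its standing hypothesis \eqref{eq:condition-K} on $\mathcal{K}$ (which holds for all the examples listed in Section~\ref{sec:RT-FA}). Specialized to $z=\alpha|v|$, the lemma gives
\[
A(t,\alpha|v|)\sim\frac{1}{\alpha|v|}\,\frac{t^{\gamma-1}}{\Gamma(\gamma)}\,Q(t),\quad t\to\infty,
\]
which is exactly the asserted asymptotic for $v(t,x)$.

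There is essentially no obstacle: the proof is a direct identification of the subordinated solution with the function $A(t,z)$ to which Lemma~\ref{KL} applies. The only care needed is the tacit assumption $v\neq 0$, so that $z=\alpha|v|$ is strictly positive and the factor $1/(\alpha|v|)$ in the conclusion is finite; if $v=0$ then $u\equiv 1$ and the subordinated quantity is identically $1$, which is not of the stated form.
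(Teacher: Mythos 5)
Your proof is correct and follows the same route as the paper: identify $v(t,x)=\int_{0}^{\infty}e^{-\alpha|v|\tau}G_{t}(\tau)\,\mathrm{d}\tau$ with $A(t,z)$ at $z=\alpha|v|$ and apply Lemma \ref{KL}. You spell out the identification and the $v\neq 0$ caveat more explicitly than the paper does, but the argument is the same.
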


\begin{proof}

From the explicit form of the solution $u(t,x)$
using (\ref{SUBFOR}) and Lemma \ref{KL} we obtain 
\[
v(t,x)\sim\frac{1}{\alpha|v|\Gamma(\gamma)}t^{\gamma-1}Q(t),\quad t\to\infty.
\]
In particular, for the $\alpha$-stable subordinator considered in
Example \ref{exa:alpha-stable1}, we obtain $v(t,x)\sim Ct^{-\alpha}$,
$C$ is a constant. Therefore, starting with a solution $u(t,x)$
with exponential decay after subordination we observe a polynomial
decay with the order defined by the random time characteristics. 
\end{proof}

\begin{example}
\label{exa:DS-2}For $d=1$ consider the dynamics
\[
\beta\frac{\mathrm{d}}{\mathrm{d}t}X(t)=\frac{1}{X^{\beta-1}(t)},\quad\beta\geq1.
\]
It is clear that the solution is given by 
\[
X(t)=(t+C)^{1/\beta}.
\]
Take the function $f(x)=\exp(-a|x|^{\beta})$, $a>0$, then the long
time behavior of the subordination $v(t,x)$ is given by 
\[
v(t,x)\sim\frac{e^{-aC}}{a}\frac{t^{\gamma-1}}{\Gamma(\gamma)}Q(t),\quad t\to\infty.
\]
In particular if we choose the density $G_{t}(\tau)$ of the inverse
subordinator $E(t)$ corresponding to the Example \ref{exa:sum-two-stables},
then we obtain
\[
v(t,x)\sim Ct^{-\alpha}(1+t^{\alpha-\beta})\sim Ct^{-\alpha},\quad t\to\infty.
\]
\end{example}

\section{Potentials and Green Measures}

\label{sec:PGM}The notion of potential is classical in the theory
of Markov processes, see, e.g., \cite{Blumenthal1968}. Recently it
was proposed the concept of Green measure as a representation of potentials
in an integral form, see \cite{KdS2020}. The modification of these
concepts for time changed Markov processes was investigated in \cite{KdS20}

Considering a dynamical system as a deterministic Markov processes,
we have the possibility to study the notion of potential and Green measure in this context.

In our framework above for a function $f:\X\to\R$ consider the solution
to the Cauchy problem 
\[
\frac{\partial}{\partial t}u(t,x)=Lu(t,x),
\]
\[
u(0,x)=f(x).
\]

Then we obtain 
\[
u(t,x)=(e^{tL}f)(x).
\]
Define the potential for the function $f$ by 
\[
U(f,x):=\int_{0}^{\infty}u(t,x)\,\mathrm{d}t=\int_{0}^{\infty}(e^{tL}f)(x)\,\mathrm{d}t=-(L^{-1}f)(x), \quad x\in\X.
\]
The existence of $U(f,x)$ is not clear at all. It depends on the
class of functions $f$ and the Liouville generator $L$. Assuming
the existence of $U(f,x)$ we would like to obtain an integral representation 
\begin{equation}
\label{eq:potential-U}
U(f,x)=\int_{\X}f(y)\,\mathrm{d}\mu^{x}(y)
\end{equation}
with a Radom measure $\mu^{x}$ on $\X$. This measure we will call
the Green measure for our dynamical system. As in the case of Markov
processes, the definition of the potential is easy to introduce but
difficult to analyze for each particular model.

Moreover,  on the base of certain particular examples we 
may assume that the potentials are well  defined for very
special classes  functions $f$. But the existence of Green
measure  we can not expect. This point we will discuss in details in 
a forthcoming paper.

After a random time change we will have the subordinated solution
$v(t,x)$ for the fractional equation (see equation \eqref{fde-for-sub-sol} above). Then we can try
again to define the potential 
\[
V(f,x):=\int_{0}^{\infty}v(t,x)\,\mathrm{d}t, \quad x\in\X.
\]
But it is not hard to see that for general random times this integral
will be divergent. In fact, it follows from the subordination formula \eqref{SUBFOR},
and the Fubini theorem that 
\[
V(f,x)=\int_{0}^{\infty}\int_{0}^{\infty}u(\tau,x)G_{t}(\tau)\,\mathrm{d}\tau\,\mathrm{d}t
=\int_{0}^{\infty}u(\tau,x)\left(\int_{0}^{\infty}G_{t}(\tau)\,\mathrm{d}t\right)\mathrm{d}t\,\mathrm{d}\tau.
\]
The inner integral is not convergent due to equality \eqref{eq:LT-G-t} and assumption \eqref{eq:H1}.
In view to overcome this difficulty we may use the notion of renormalized potential. 
More precisely, inspired by the time change of Markov processes (see \cite{KdS20} for details), we define
the renormalized potential 
\begin{equation}
V_r(f,x):=\lim_{t\to\infty}\frac{1}{N(t)}\int_0^tv(s,x)\,\mathrm{d}s, \quad t\ge0,
\end{equation}
where  $N(t)$ is defined by $N(t):=\int_0^tk(s)\,\mathrm{d}s$.
 Then assuming the existence of $U(f,x)$ it is not difficult to show that 
\[
V_r(f,x)=\int_{0}^{\infty}u(t,x)\,\mathrm{d}t.
\] 

\section{Path Transformation}

\label{sec:PT}Now we investigate the transformation of the trajectories
of dynamical systems under random times. As above we have the Liouville
equation for 
\[
u(t,x):=f(X(t,x)),\quad t\ge0,\;x\in\X,
\]
that is, 
\[
\frac{\partial}{\partial t}u(t,x)=Lu(t,x),\quad u(0,x)=f(x),
\]
where $L$ is the generator of a semigroup. In addition, let $E(t)$,
$t\ge0$ be the inverse subordinator process, then we can consider
the time changed random dynamical systems 
\[
Y(t,x)=X(E(t),x),\quad t\ge0,\;x\in\X.
\]
Define 
\[
v(t,x):=\E[f(Y(t,x)].
\]
The subordination formula gives 
\[
v(t,x)=\int_{0}^{\infty}u(\tau,x)G_{t}(\tau)\,\mathrm{d}\tau.
\]

Now we will take the vector-function 
\[
f(x)=x\in\X.
\]
Then the average trajectories of $Y(t,x)$ is given by

\[
\E(Y(t,x))= \int_{0}^{\infty}X(\tau,x)G_{t}(\tau)\,\mathrm{d}\tau.
\]
\begin{example}
If we consider the dynamical system of Proposition 3.1, that
is, $X(t,x)=vt$, then we obtain 
\[
\mathbb{E}[Y(t,x)]=v\int_{0}^{\infty}\tau  G_{t}(\tau)\,\mathrm{d}\tau.
\]

Therefore, we need to know the first moment of the density $G_t$.
Consider the case of the inverse $\alpha$-stable subordinator from Example 2.1. Then
$$
\int_0^\infty \tau G_t(\tau) d\tau = C t^{\alpha}.
$$
Therefore, the asymptotic of the time changed trajectory will be
slower (proportional to $t^{\alpha}$) instead of initial linear $vt$ motion.
In a forthcoming paper we will study in detail these results  for other classes 
of inverse subordinators. 

\end{example}

\subsection*{Acknowledgments}

This work has been partially supported by Center for Research in Mathematics
and Applications (CIMA) related with the Statistics, Stochastic Processes
and Applications (SSPA) group, through the grant UIDB/MAT/04674/2020
of FCT-Funda{\c c\~a}o para a Ci{\^e}ncia e a Tecnologia, Portugal.

The financial support by the Ministry for Science and Education of
Ukraine through Project 0119U002583 is gratefully acknowledged.

\end{document}